\providecommand{\U}[1]{\protect\rule{.1in}{.1in}}
\newtheorem{theorem}{Theorem}[section]
\newtheorem{algorithm}{Algorithm}[section]
\newtheorem{case}{Case}[section]
\newtheorem{condition}{Condition}[section]
\newtheorem{corollary}{Corollary}[section]
\newtheorem{definition}{Definition}[section]
\newtheorem{lemma}{Lemma}[section]
\newtheorem{proposition}{Proposition}[section]
\newtheorem{remark}{Remark}[section]
\newenvironment{proof}[1][Proof]{\begin{trivlist}
\item[\hskip \labelsep {\bfseries
#1}]}{\hspace*{\fill}$\square$\end{trivlist}}
\renewcommand\@biblabel[1]{#1.}
\begin{document}

\title{\textbf{Strict Fej\'{e}r Monotonicity by Superiorization of
Feasibility-Seeking Projection Methods}}
\author{Yair Censor\thanks{Y. Censor (corresponding author) Department of Mathematics,
University of Haifa, Mt. Carmel, Haifa 3498838, Israel. e-mail:
yair@math.haifa.ac.il} $\ \ \bullet$ Alexander J. Zaslavski\thanks{A.J.
Zaslavski, Department of Mathematics, The Technion -- Israel Institute of
Technology, Technion City, Haifa 32000, Israel. e-mail:
ajzasl@techunix.technion.ac.il}
\and Communicated by Jonathan Michael Borwein}
\date{February 23, 2014. Revised April 19, 2014. Revised: May 26, 2014.}
\maketitle

\begin{abstract}
We consider the superiorization methodology, which can be thought of as lying
between feasibility-seeking and constrained minimization. It is not quite
trying to solve the full fledged constrained minimization problem; rather, the
task is to find a feasible point which is superior (with respect to the
objective function value) to one returned by a feasibility-seeking only algorithm.

Our main result reveals new information about the mathematical behavior of the
superiorization methodology. We deal with a constrained minimization problem
with a feasible region, which is the intersection of finitely many closed
convex constraint sets, and use the dynamic string-averaging projection
method, with variable strings and variable weights, as a feasibility-seeking
algorithm. We show that any sequence, generated by the superiorized version of
a dynamic string-averaging projection algorithm, not only converges to a
feasible point but, additionally, either its limit point solves the
constrained minimization problem or the sequence is strictly Fej\'{e}r
monotone with respect to a subset of the solution set of the original problem.

\end{abstract}

\textbf{Keywords }Bounded perturbation resilience, constrained minimization,
convex feasibility problem, dynamic string-averaging projections, strict
Fej\'{e}r monotonicity, subgradients, superiorization methodology,
superiorized version of an algorithm.

\textbf{2010 Mathematics Subject Classification (MSC) }90C25, 90C30, 90C45, 65K10.

\section{Introduction\label{sect:intro}}

\textbf{What is superiorization}. The recently developed superiorization
methodology (SM) lies between feasibility-seeking and constrained minimization
(CM). It is not quite trying to solve the full fledged CM problem; rather, the
task is to find a feasible point of the CM problem, that is superior, not
necessarily optimal, with respect to the objective function value to one
returned by a feasibility-seeking only algorithm. Therefore, the SM can be
beneficial for CM problems for which an exact algorithm has not yet been
discovered, or when existing exact optimization algorithms are very time
consuming or require too much computer space for realistic large problems to
be run on commonplace computers. In such cases, efficient feasibility-seeking
iterative projection methods that provide non-optimal but
constraints-compatible solutions, can be turned by the SM into efficient
algorithms for superiorization that will be practically useful from the point
of view of the underlying objective function.

For the SM to be useful for a CM problem we need to have an efficient
feasibility-seeking algorithm that is in some well-defined sense perturbation
resilient. Then the SM uses those permitted perturbations in order to steer
the superioized version of the original feasibility-seeking algorithm toward
points with lesser, not necessarily minimal, objective function values. The
advantage is that in this manner one uses essentially not an optimization
algorithm but a superiorized-feasibility-seeking algorithm to attack the CM
problem. The latter methods are in many cases very efficient; see, e.g.,
\cite{cccdh10}, and, therefore, can save time and computing resources as
compared with exact optimization algorithms.

Additionally, in many mathematical formulations of significant real-world
technological or physical problems the objective function is exogenous to the
modeling process which defines the constraints. In such cases the
\textquotedblleft faith\textquotedblright\ of the modeler in the usefulness of
an objective function for the application at hand is limited and, as a
consequence, it is not worthwhile to invest too much resources in trying to
reach an exact constrained minimum point. These notions are rigorously
explained in the next sections below.

\textbf{Contribution}. Our main result, in Theorem \ref{thm:2.1} below,
establishes a mathematical basis for the behavior of the SM when dealing with
a CM problem with a feasible region that is the intersection of finitely many
closed convex constraint sets, see Case \ref{case:1} in Section \ref{sect:SM}
below. We use the dynamic string-averaging projection (DSAP) method, with
variable strings and variable weights, algorithmic scheme as a
feasibility-seeking algorithm, which is indeed bounded perturbations
resilient. The bounded perturbations resilience of the DSAP method has been
proved in \cite{cz12} and the practical behavior of the SM was observed in
numerous recent works, see references mentioned below. Our contribution here
is the mathematical guarantee of the convergence behavior of the superiorized
version of the DSAP algorithm.

Theorem \ref{thm:2.1} below says that any sequence, generated by the
superiorized version of a DSAP algorithm, given in Algorithm
\ref{alg:super-dsap} below, will not only converge to a feasible point of the
underlying CM problem, a fact which is due to the bounded perturbations
resilience of the DSAP method, but, additionally, that either its limit point
will solve the CM problem (\ref{eq:cm}) or that the sequence is strictly
Fej\'{e}r monotone with respect to; i.e., gets strictly closer to the points
of, a subset of the solution set of the CM problem according to
(\ref{eq:strict-F}) below.

\textbf{Related work}. This paper is a sequel to a series of recent
publications on the SM
\cite{cz12,bk13,bdhk07,cdh10,dhc09,gh13,rand-conmath,hd08,hgdc12,wenma13,ndh12,pscr10}%
, culminating in \cite{cdhst14}. The latter contains a detailed description of
the SM, its motivation, and an up-to-date review of SM-related previous work,
including a reference to \cite{bdhk07} in which it all started, although
without using yet the terms superiorization and bounded perturbation
resilience. \cite{bdhk07} was the first to propose this approach and implement
it in practice, but its roots go back to \cite{brz06,brz08} where it was shown
that if iterates of a nonexpansive operator converge for any initial point,
then its inexact iterates with summable errors also converge. More details on
related work appear in \cite[Section 3]{cdhst14} and in \cite[Section
1]{cz-2-2014}.

\textbf{Paper structure}. The paper is laid out as follows. Section
\ref{sect:SM} presents the SM. Preliminaries needed for our study are
presented in Section \ref{sect:prelims}, and the superiorized version of the
DSAP algorithm is given in Section \ref{sect:super-DSAP}. The proof of our
main result that gives a mathematical basis for the SM is presented in Section
\ref{sect:main-result}. Conclusions are given in Section
\ref{sect:conclusions}.

\section{The Superiorization Methodology\label{sect:SM}}

Consider some mathematically-formulated problem, of any kind or sort, and
denote it by $T.$ The set of solutions, called the solution set of $T$, is
denoted by $\operatorname*{SOL}(T).$ The superiorization methodology (SM) of
\cite{cdh10,hgdc12,cdhst14} is intended for constrained minimization (CM)
problems of the form:%
\begin{equation}
\mathrm{minimize}\left\{  \phi(x)\mid x\in\Psi_{T}\right\}  , \label{eq:cm}%
\end{equation}
where $\phi:\mathbb{R}^{J}\rightarrow\mathbb{R}$ is an objective function and
$\Psi_{T}\subseteq\mathbb{R}{^{J}}$ is the solution set $\Psi_{T}%
=\operatorname*{SOL}(T)$ of a problem $T$. In \cite{bdhk07, cdh10}
$\operatorname*{SOL}(T)$ was assumed to be nonempty and in later works
\cite{hgdc12,cdhst14} this assumption was removed. Here, however, we adhere to
$\Psi_{T}=\operatorname*{SOL}(T)\neq\emptyset$ throughout this paper.

To proceed with the SM, the problem $T$ can be just about any mathematical or
mathematically-formulated problem for which a \textquotedblleft
good\textquotedblright\ iterative algorithm for its solution exists which is
bounded perturbation resilient, as explained below. Two widely-used cases of
such underlying problems and their set $\Psi_{T}$ come to mind, although the
general approach is by no means restricted to those.

\begin{case}
\textrm{\label{case:1}The set $\Psi_{T}$ is the solution set of a convex
feasibility problem (CFP) of the form: find a vector $x^{\ast}\in\Psi
_{T}:=\cap_{i=1}^{I}C_{i},$ where the sets $C_{i}\subseteq\mathbb{R}^{J}$ are
closed and convex subsets of the Euclidean space $\mathbb{R}^{J}$; see, e.g.,
\cite{bb96, byrnebook, chinneck-book} or \cite[Chapter 5]{CZ97} for results
and references on the broad topic of CFPs or consult
\cite{bk13,ac89,BC11,bmr04,CEG12,cccdh10,ER11,galantai,GTH}. In such a case,
we deal in (\ref{eq:cm}) with a standard CM problem. This is the case analyzed
in this paper. }
\end{case}

\begin{case}
\textrm{\label{case:2}The set $\Psi_{T}$ is the solution set of another CM
problem which serves as the problem $T$, such as,%
\begin{equation}
\mathrm{minimize}\left\{  J(x)\mid x\in\Omega\right\}  ,\label{eq:Jxomega}%
\end{equation}
in which case we look at%
\begin{equation}
\Psi_{T}:=\left\{  x^{\ast}\in\Omega\mid J(x^{\ast})\leq J(x)\text{ for all
}x\in\Omega\right\}  ,
\end{equation}
assuming that $\Psi_{T}$ is nonempty. This case has been studied in
\cite{rand-conmath}, \cite{gh13} and \cite{tie}. }
\end{case}

In either case, or any other case for the set $\Psi_{T}$, the SM strives not
to solve (\ref{eq:cm}) but rather the task is to find a point in $\Psi_{T}$
which is \textit{superior}, i.e., has a lower, but not necessarily minimal,
value of the $\phi$ objective function value, to one returned by an algorithm
that solves the original problem $T$ alone.

This is done in the SM by first investigating the bounded perturbation
resilience of an available iterative algorithm designed to solve the original
problem $T$ and then proactively using such permitted perturbations in order
to steer the iterates of such an algorithm toward lower values of the $\phi$
objective function while not loosing the convergence to a point in $\Psi_{T}$.
See \cite{cdh10,hgdc12,cdhst14} for details. A review of
superiorization-related previous work appears in \cite[Section 3]{cdhst14}.

\section{Preliminaries\label{sect:prelims}}

Let $X$ be a Hilbert space equipped with an inner product $\left\langle
\cdot,\cdot\right\rangle ,$ which induces a complete norm $||\cdot||$. For
each $x\in X$ and each nonempty set $E\subseteq X$ define%
\begin{equation}
d(x,E):=\operatorname{inf}\{||x-y||\mid\;y\in E\},
\end{equation}
and for each $x\in X$ and each $r>0$ define the closed ball around $x$ with
radius $r$ by%
\begin{equation}
B(x;r):=\{y\in X\mid\;||x-y||\leq r\}.
\end{equation}

The following proposition and corollary are well-known, see, e.g.,
\cite{CEG12} or \cite{BC11}.

\begin{proposition}
\textrm{\textrm{\label{prop:prop1.1}Let $D$ be a nonempty, closed and convex
subset of $X$. Then, for each $x\in X$ there is a unique point $P_{D}(x)\in D$
(called the projection of $x$ onto $D$) satisfying%
\begin{equation}
||x-P_{D}(x)||=\operatorname{inf}\{||x-y||\mid\ y\in D\}.
\end{equation}
Moreover,%
\begin{equation}
||P_{D}(x)-P_{D}(y)||\leq||x-y||{\text{ for all }}x,y\in X,
\end{equation}
and for each $x\in X$ and each $z\in D$,%
\begin{equation}
\left\langle z-P_{D}(x),x-P_{D}(x)\right\rangle \leq0.
\end{equation}
} }
\end{proposition}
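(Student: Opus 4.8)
The plan is to prove the three assertions of Proposition~\ref{prop:prop1.1} in sequence, beginning with existence and uniqueness of the projection. For existence, I would let $\delta:=\operatorname{inf}\{||x-y||\mid y\in D\}$ and pick a minimizing sequence $(y_n)\subseteq D$ with $||x-y_n||\to\delta$. The key tool is the \emph{parallelogram law} $||a+b||^2+||a-b||^2=2||a||^2+2||b||^2$, applied to $a=x-y_n$ and $b=x-y_m$. This yields
\begin{equation}
||y_n-y_m||^2=2||x-y_n||^2+2||x-y_m||^2-4\left\|x-\tfrac{y_n+y_m}{2}\right\|^2.
\end{equation}
Since $D$ is convex, $(y_n+y_m)/2\in D$, so the last term is bounded below by $4\delta^2$; as $n,m\to\infty$ the first two terms tend to $2\delta^2+2\delta^2=4\delta^2$, forcing $||y_n-y_m||\to 0$. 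Thus $(y_n)$ is Cauchy, and by completeness of $X$ it converges to some $P_D(x)$, which lies in $D$ because $D$ is closed and attains the distance $\delta$ by continuity of the norm. Uniqueness follows from the same identity: if two points both realize $\delta$, the computation forces their distance to be zero.

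Next I would establish the variational inequality $\left\langle z-P_D(x),x-P_D(x)\right\rangle\leq 0$ for all $z\in D$, since the firm-nonexpansiveness estimate is most cleanly derived from it. Fix $z\in D$ and, using convexity, note $P_D(x)+t(z-P_D(x))\in D$ for every $t\in[0,1]$. The real-valued function $g(t):=||x-P_D(x)-t(z-P_D(x))||^2$ attains its minimum over $[0,1]$ at $t=0$ by the defining property of $P_D(x)$. Expanding $g(t)=||x-P_D(x)||^2-2t\left\langle x-P_D(x),z-P_D(x)\right\rangle+t^2||z-P_D(x)||^2$ and imposing $g'(0)\geq 0$ gives exactly $\left\langle z-P_D(x),x-P_D(x)\right\rangle\leq 0$.

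Finally, for the nonexpansiveness bound $||P_D(x)-P_D(y)||\leq||x-y||$, I would apply the variational inequality twice. Writing $u:=P_D(x)$ and $v:=P_D(y)$, I would use $z=v$ in the inequality for $x$ and $z=u$ in the inequality for $y$, obtaining $\left\langle v-u,x-u\right\rangle\leq 0$ and $\left\langle u-v,y-v\right\rangle\leq 0$. Adding these and rearranging yields $||u-v||^2\leq\left\langle u-v,x-y\right\rangle$, and the Cauchy--Schwarz inequality then gives $||u-v||^2\leq||u-v||\,||x-y||$, whence the claim after dividing by $||u-v||$ (the case $u=v$ being trivial). The main obstacle is really only the first part: one must be careful to invoke both convexity (so that midpoints stay in $D$, making the parallelogram argument produce a Cauchy sequence) and completeness and closedness of $D$ (so the limit exists and belongs to $D$). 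The remaining two inequalities are then routine consequences of the variational characterization.
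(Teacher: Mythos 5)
Your proof is correct and complete: the parallelogram-law argument for existence and uniqueness, the variational inequality obtained from $g'(0)\geq 0$, and the derivation of nonexpansiveness by adding the two variational inequalities and applying Cauchy--Schwarz are all sound. The paper itself offers no proof of this proposition, citing it as well-known (to \cite{CEG12} and \cite{BC11}), and your argument is precisely the classical one found in those references, so there is nothing to flag.
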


\begin{corollary}
\textrm{\label{cor:corollary1.1}Assume that $D$ is a nonempty, closed and
convex subset of $X$. Then, for each $x\in X$ and each $z\in D$,%
\begin{equation}
||z-P_{D}(x)||^{2}+||x-P_{D}(x)||^{2}\leq||z-x||^{2}.
\end{equation}
}
\end{corollary}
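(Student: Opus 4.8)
The plan is to derive the inequality directly from the obtuse-angle (variational) inequality supplied by Proposition \ref{prop:prop1.1}, since that inequality already encodes the geometric fact that $P_{D}(x)$ is the point of $D$ nearest to $x$. First I would abbreviate $p:=P_{D}(x)$ to keep the computation readable, and then prepare to rewrite the right-hand side by splitting the vector $z-x$ through the projection point, using the decomposition $z-x=(z-p)-(x-p)$.

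Next I would expand $||z-x||^{2}=||(z-p)-(x-p)||^{2}$ via the defining identity of the inner-product norm, which produces precisely the two squared terms appearing on the left-hand side together with a single cross term; concretely,
\begin{equation}
||z-x||^{2}=||z-p||^{2}+||x-p||^{2}-2\left\langle z-p,x-p\right\rangle .
\end{equation}
The entire argument then reduces to controlling the sign of the cross term $\left\langle z-p,x-p\right\rangle $.

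At this point I would invoke the variational inequality of Proposition \ref{prop:prop1.1}, which asserts that $\left\langle z-P_{D}(x),x-P_{D}(x)\right\rangle \leq 0$ for every $z\in D$. Since $z\in D$ by hypothesis, this gives $-2\left\langle z-p,x-p\right\rangle \geq 0$. Substituting into the identity above and discarding this nonnegative quantity yields $||z-x||^{2}\geq ||z-p||^{2}+||x-p||^{2}$, which is exactly the claimed estimate after trivial rearrangement.

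I do not anticipate any genuine obstacle here, as the result is an immediate consequence of the preceding proposition. The only points requiring care are selecting the decomposition $z-x=(z-p)-(x-p)$ (rather than some other grouping), so that the emerging cross term coincides with the inner product controlled by Proposition \ref{prop:prop1.1}, and tracking the sign of that term so that the final inequality points in the correct direction. Geometrically, the statement is just the Pythagorean-type estimate expressing that the angle at $p=P_{D}(x)$ in the triangle with vertices $z$, $p$, $x$ is obtuse.
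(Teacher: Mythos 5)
Your argument is correct: the expansion $\Vert z-x\Vert^{2}=\Vert z-p\Vert^{2}+\Vert x-p\Vert^{2}-2\left\langle z-p,x-p\right\rangle$ combined with the variational inequality $\left\langle z-P_{D}(x),x-P_{D}(x)\right\rangle \leq 0$ from Proposition \ref{prop:prop1.1} immediately gives the claim. The paper does not prove this corollary (it cites it as well-known), but your derivation is precisely the standard argument the cited references use, so there is nothing further to compare.
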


Suppose that $C_{1},C_{2},\dots,C_{m}$ are nonempty, closed and convex subsets
of $X$ where $m$ is a natural number. Set%
\begin{equation}
C:=\cap_{i=1}^{m}C_{i}, \label{eq:1.1}%
\end{equation}
and assume throughout that $C\neq\emptyset$. For $i=1,2,\dots,m,$ denote
$P_{i}:=P_{C_{i}}.$ By an \textit{index vector}, we a mean a vector
$t=(t_{1},t_{2},\dots,t_{q})$ such that for all $i=1,2,\dots,q$, $t_{i}%
\in\{1,2,\dots,m\}$ and whose \textit{length} is $\ell(t)=q.$ Define the
product of the individual projections onto the sets whose indices appear in
the index vector $t$ by%
\begin{equation}
P[t]:=P_{t_{q}}\cdots P_{t_{1}}%
\end{equation}
and call it a \textit{string operator}.

A finite set $\Omega$ of index vectors is called \textit{fit} iff for each
$i\in\{1,2,\dots,m\}$, there exists a vector $t\in\Omega$ such that $t_{s}=i$
for some $s\in\{1,2,\dots,q\}$. For each index vector $t$ the string operator
is nonexpansive, since the individual projections are, i.e.,%
\begin{equation}
||P[t](x)-P[t](y)||\leq||x-y||\text{ for all }x,y\in X,
\end{equation}
and also%
\begin{equation}
P[t](x)=x\text{ for all }x\in C.
\end{equation}

Denote by $\mathcal{M}$ the collection of all pairs $(\Omega,w)$, where
$\Omega$ is a finite fit set of index vectors and%
\begin{equation}
w:\Omega\rightarrow\left]  0,\infty\right[  \text{ is such that }\sum
_{t\in\Omega}w(t)=1. \label{eq:1.6}%
\end{equation}

A pair $(\Omega,w)\in\mathcal{M}$ and the function $w$ were called in
\cite{bdhk07} an \textit{amalgamator} and a \textit{fit weight function},
respectively. For any $(\Omega,w)\in\mathcal{M}$ define the convex combination
of the end-points of all strings defined by members of $\Omega$ by%
\begin{equation}
P_{\Omega,w}(x):=\sum_{t\in\Omega}w(t)P[t](x),\;x\in X. \label{eq:1.7}%
\end{equation}

It is easy to see that%
\begin{equation}
||P_{\Omega,w}(x)-P_{\Omega,w}(y)||\leq||x-y||{\text{ for all }}x,y\in X,
\label{eq:1.8}%
\end{equation}
and%
\begin{equation}
P_{\Omega,w}(x)=x{\text{ for all }}x\in C. \label{eq:1.9}%
\end{equation}

We will make use of the following condition, known in the literature as
bounded regularity, see \cite{bb96}, and assume throughout that it holds.

\begin{condition}
\textrm{\label{cond:A}For each $\epsilon>0$ and each $M>0$ there exists a
positive $\delta=\delta(\epsilon,M)$ such that for each $x\in B(0,M)$
satisfying $d(x,C_{i})\leq\delta$, for all $i=1,2,\dots,m,$ the inequality
$d(x,C)\leq\epsilon$ holds. }
\end{condition}

For the proof of the next proposition see, e.g., \cite[Proposition 5]{cz12}.

\begin{proposition}
\textrm{\textrm{If the space $X$ is finite-dimensional then Condition
\ref{cond:A} holds. } }
\end{proposition}

We choose an arbitrary fixed number $\Delta\in\left]  0,1/m\right[  $ and an
integer $\bar{q}\geq m$ and denote by $\mathcal{M}_{\ast}\equiv\mathcal{M}%
_{\ast}(\Delta,\bar{q})$ the set of all $(\Omega,w)\in\mathcal{M}$ such that
the lengths of the strings are bounded and the weights are all bounded away
from zero, namely,%
\begin{equation}
\mathcal{M}_{\ast}:=\{(\Omega,w)\in\mathcal{M\mid}\text{ }\ell(t)\leq\bar
{q}\text{ and }w(t)\geq\Delta\text{ for all }t\in\Omega\}. \label{eq:11516}%
\end{equation}

The dynamic string-averaging projection (DSAP) method with variable strings
and variable weights is the following algorithm.

\begin{algorithm}
\textrm{\label{alg:sap-v}$\left.  {}\right.  $\textbf{The DSAP method with
variable strings and variable weights} }

\textrm{\textbf{Initialization}: select an arbitrary $x^{0}\in X$, }

\textrm{\textbf{Iterative step}: given an iteration vector $x^{k}$ pick a pair
$(\Omega_{k},w_{k})\in\mathcal{M}_{\ast}$ and calculate the next iteration
vector $x^{k+1}$ by%
\begin{equation}
x^{k+1}=P_{\Omega_{k},w_{k}}(x^{k})\text{.\label{eq:algv}}%
\end{equation}
}
\end{algorithm}

The first prototypical string-averaging algorithmic scheme appeared in
\cite{ceh01} and subsequent work on various such algorithmic operators
includes \cite{pscr10,cz-2-2014,CS08,CS09,ct03,crombez,gordon,pen09,rhee03}.

If in the DSAP method one uses only a single index vector $t=(1,2,\dots,m)$
that includes all constraints indices then the fully-sequential Kaczmarz
cyclic projection method; see, e.g., \cite[p. 220]{CEG12}, is obtained,
sometimes called the POCS, for Projections Onto Convex Sets, method; see,
e.g., \cite[Chapter 5]{CZ97}. For linear hyperplanes as constraints sets the
latter is equivalent with the, independently discovered, ART, for Algebraic
Reconstruction Technique, in image reconstruction from projections, see
\cite{GTH}. If, at the other extreme, one uses exactly $m$ one-dimensional
index vectors $t=(i),$ for $i=1,2,\dots,m,$ each consisting of exactly one
constraint index, then the fully-simultaneous projection method of Cimmino;
see, e.g., \cite[page 405]{bb96}, is recovered. In-between these
\textquotedblleft extremes\textquotedblright\ the DSAP method allows for a
large \textquotedblleft arsenal\textquotedblright\ of many specific
feasibility-seeking projection algorithms -- to all of which the results of
this paper will apply.

For the reader's convenience we quote here the definition of bounded
perturbations resilience and the bounded perturbations resilience theorem of
the DSAP method, see \cite{cz12} for details. The next definition was
originally given in \cite[Definition 1]{cdh10} with a finite-dimensional
Euclidean space $\mathbb{R}^{J}$ instead of the Hilbert space $X$ in the
definition below which is taken from \cite{cz12}.

\begin{definition}
\textrm{\label{def:resilient}Given a problem $T,$ an algorithmic operator
$\mathcal{A}:X\rightarrow X$ is said to be \texttt{bounded perturbations
resilient}\emph{ }iff the following is true: if the sequence $\{x^{k}%
\}_{k=0}^{\infty},$ generated by $x^{k+1}=\mathcal{A}(x^{k}),$ for all
$k\geq0,$ converges to a solution of $T$ for all $x^{0}\in X$, then any
sequence $\{y^{k}\}_{k=0}^{\infty}$ of points in $X$ that is generated by
$y^{k+1}=\mathcal{A}(y^{k}+\beta_{k}v^{k}),$ for all $k\geq0,$ also converges
to a solution of $T$ provided that, for all $k\geq0$, $\beta_{k}v^{k}$ are
\texttt{bounded perturbations}, meaning that $\beta_{k}\geq0$ for all $k\geq0$
such that ${\displaystyle\sum\limits_{k=0}^{\infty}}\beta_{k}\,<\infty$ and
such that the sequence $\{v^{k}\}_{k=0}^{\infty}$ is bounded. }
\end{definition}

The convergence properties and the, so called, bounded perturbation resilience
of this DSAP method were analyzed in \cite{cz12}.

\begin{theorem}
\textrm{\label{thm:1.1}\cite[Theorem 12]{cz12} Let $C_{1},C_{2},\dots,C_{m}$
be nonempty, closed and convex subsets of $X,$ where $m$ is a natural number,
$C:=\cap_{i=1}^{m}C_{i}\not =\emptyset$, let $\{\beta_{k}\}_{k=0}^{\infty}$ be
a sequence of non-negative numbers such that $\sum_{k=0}^{\infty}\beta
_{k}<\infty$, let $\{v^{k}\}_{k=0}^{\infty}\subset X$ be a norm bounded
sequence, let $\{(\Omega_{k},w_{k})\}_{k=0}^{\infty}\subset\mathcal{M}_{\ast
},$ for all $k\geq0,$ and let $y^{0}\in X.$ Then, any sequence $\{y^{k}%
\}_{k=0}^{\infty},$ generated by the iterative formula%
\begin{equation}
y^{k+1}=P_{\Omega_{k},w_{k}}(y^{k}+\beta_{k}v^{k}%
)\text{,\label{eq:algv-perturbed}}%
\end{equation}
converges in the norm of $X$ and its limit belongs to $C$. }
\end{theorem}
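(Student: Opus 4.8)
The plan is to establish that the perturbed iteration is quasi-Fej\'er monotone with respect to $C$, that its iterates are asymptotically feasible in the sense that $d(y^k,C)\to 0$, and then to combine these two facts to force norm convergence to a point of $C$. To begin, fix an arbitrary $c\in C$ and use the nonexpansiveness (\ref{eq:1.8}) of $P_{\Omega_k,w_k}$ together with the fact (\ref{eq:1.9}) that it fixes $c$ to obtain
\begin{equation}
\|y^{k+1}-c\|=\|P_{\Omega_k,w_k}(y^k+\beta_k v^k)-P_{\Omega_k,w_k}(c)\|\leq\|y^k-c\|+\beta_k\|v^k\|.
\end{equation}
Because $\sum_k\beta_k<\infty$ and $\{v^k\}$ is bounded, the series $\sum_k\beta_k\|v^k\|$ converges, so the standard quasi-Fej\'er lemma shows that $\{\|y^k-c\|\}$ converges for every $c\in C$; in particular $\{y^k\}$ is bounded, say $\{y^k\}\subset B(0,M)$, and $\beta_k v^k\to 0$.

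The core of the argument is a progress inequality. Write $u^k:=y^k+\beta_k v^k$ and, for a string $t=(t_1,\dots,t_{\ell(t)})\in\Omega_k$, set $z_0^t:=u^k$ and $z_j^t:=P_{t_j}(z_{j-1}^t)$, so that $P[t](u^k)=z_{\ell(t)}^t$. Applying Corollary \ref{cor:corollary1.1} at each stage with the set $C_{t_j}\supseteq C$ and telescoping gives
\begin{equation}
\|P[t](u^k)-c\|^2+\sum_{j=1}^{\ell(t)}\|z_{j-1}^t-z_j^t\|^2\leq\|u^k-c\|^2.
\end{equation}
By convexity of $\|\cdot-c\|^2$ and the definition (\ref{eq:1.7}) of $P_{\Omega_k,w_k}$ as a convex combination with $\sum_t w_k(t)=1$, and setting $\sigma_k:=\sum_{t\in\Omega_k}w_k(t)\sum_{j}\|z_{j-1}^t-z_j^t\|^2$, I obtain
\begin{equation}
\|y^{k+1}-c\|^2\leq\|u^k-c\|^2-\sigma_k.
\end{equation}
Expanding $\|u^k-c\|^2=\|y^k-c\|^2+2\beta_k\langle v^k,y^k-c\rangle+\beta_k^2\|v^k\|^2$ and using boundedness of $\{y^k\}$ and $\{v^k\}$ bounds the two perturbation terms by a summable sequence $\gamma_k$, so that $\|y^{k+1}-c\|^2\leq\|y^k-c\|^2+\gamma_k-\sigma_k$. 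Since $\{\|y^k-c\|^2\}$ converges and $\sum_k\gamma_k<\infty$, summation forces $\sigma_k\to 0$.

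Next I would convert $\sigma_k\to 0$ into asymptotic feasibility. As $w_k(t)\geq\Delta>0$ and $\ell(t)\leq\bar q$, vanishing of $\sigma_k$ forces $\|z_{j-1}^t-z_j^t\|\to 0$ uniformly over $t\in\Omega_k$ and $j$, so every intermediate point satisfies $\|z_j^t-u^k\|\leq\bar q\,(\sigma_k/\Delta)^{1/2}\to 0$. Given a constraint index $i$, fitness of $\Omega_k$ produces $t\in\Omega_k$ with $t_s=i$ for some $s$; since $z_s^t\in C_i$ this yields $d(u^k,C_i)\leq\|u^k-z_s^t\|\to 0$, and because $\beta_k v^k\to 0$ also $d(y^k,C_i)\to 0$ for every $i$. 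Condition \ref{cond:A}, applied on the ball $B(0,M)$ containing $\{y^k\}$, then gives $d(y^k,C)\to 0$.

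Finally I would upgrade feasibility to convergence. Put $p^n:=P_C(y^n)\in C$, so that $\|y^n-p^n\|=d(y^n,C)\to 0$; for $m>n$, quasi-Fej\'er monotonicity with respect to $p^n$ gives
\begin{equation}
\|y^m-y^n\|\leq\|y^m-p^n\|+\|y^n-p^n\|\leq 2\,d(y^n,C)+\sum_{k=n}^{\infty}\beta_k\|v^k\|.
\end{equation}
Both terms tend to $0$ as $n\to\infty$ uniformly in $m$, so $\{y^k\}$ is Cauchy and, by completeness of $X$, converges to some $y^{\ast}$; as $d(y^k,C)\to 0$ and $C$ is closed, $y^{\ast}\in C$. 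I expect the main obstacle to be the progress inequality and its conversion into $d(y^k,C_i)\to 0$: one must telescope carefully along the variable-length strings, control the intermediate projections $z_j^t$ rather than $u^k$ itself, and keep the perturbation cross-terms summable, while exploiting both the uniform lower bound $\Delta$ on the weights and the fitness of each $\Omega_k$.
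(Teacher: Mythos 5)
Your proposal is correct and complete. Note that the paper itself does not prove this theorem: it is imported verbatim from \cite[Theorem 12]{cz12}, so there is no in-paper proof to compare against; what you have written is a valid self-contained argument along the standard lines one would expect to find in that reference. Each step checks out: the one-step quasi-Fej\'{e}r estimate via (\ref{eq:1.8})--(\ref{eq:1.9}) gives convergence of $\{\|y^k-c\|\}$ and boundedness; telescoping Corollary \ref{cor:corollary1.1} along each string and averaging via (\ref{eq:1.7}) yields the progress inequality with the summable cross-terms correctly absorbed into $\gamma_k$; the uniform lower bound $\Delta$ on the weights, the cap $\bar q$ on string lengths, and the fitness of $\Omega_k$ together convert $\sigma_k\to 0$ into $d(y^k,C_i)\to 0$ for every $i$; and the final Cauchy argument through $p^n=P_C(y^n)$ is the standard way to upgrade quasi-Fej\'{e}r monotonicity plus asymptotic feasibility to strong convergence. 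One point worth making explicit: your appeal to Condition \ref{cond:A} (bounded regularity) is not optional but essential --- without it one gets only $d(y^k,C_i)\to 0$ for each $i$ separately, which in an infinite-dimensional Hilbert space need not imply $d(y^k,C)\to 0$; the paper covers this by assuming Condition \ref{cond:A} throughout, so your use of it is legitimate, but it should be flagged as a hypothesis rather than a free fact.
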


\section{The Superiorized Version of the Dynamic String-Averaging Projection
Algorithm\label{sect:super-DSAP}}

The \textquotedblleft superiorized version of an algorithm\textquotedblright%
\ has evolved and undergone several modifications throughout the publications
on the SM, from the initial \cite[pseudocode on page 543]{bdhk07} through
\cite{dhc09,hd08,ndh12} until the most recent \cite[\textquotedblleft
Superiorized Version of the Basic Algorithm\textquotedblright\ in Section
4]{cdhst14}. The next algorithm, called \textquotedblleft The superiorized
version of the DSAP algorithm\textquotedblright, is a further modification of
the latest \cite[\textquotedblleft Superiorized Version of the Basic
Algorithm\textquotedblright\ in Section 4]{cdhst14} as we explain in Remark
\ref{remark:on-alg} below.

Let $C$ be as in (\ref{eq:1.1}), let $\phi:X\rightarrow\mathbb{R}$ be a convex
continuous function, and consider the set%
\begin{equation}
C_{min}:=\{x\in C\mid\;\phi(x)\leq\phi(y){\text{ for all }}y\in C\},
\end{equation}
and assume that $C_{min}\not =\emptyset.$

\begin{algorithm}
\textrm{\label{alg:super-dsap}$\left.  {}\right.  $\textbf{The superiorized
version of the DSAP algorithm} }

\textrm{\textbf{(0) Initialization}: Let $N$ be a natural number and let
$y^{0}\in X$ be an arbitrary user-chosen vector. }

\textrm{\textbf{(1)} \textbf{Iterative step}: Given a current vector $y^{k}$
pick an $N_{k}\in\{1,2,\dots,N\}$ and start an inner loop of calculations as
follows: }

\textrm{\textbf{(1.1) Inner loop initialization}: Define $y^{k,0}=y^{k}.$ }

\textrm{\textbf{(1.2) Inner loop step: }Given $y^{k,n},$ as long as $n<N_{k}$
do as follows: }

\textrm{\textbf{(1.2.1) }Pick a $0<\beta_{k,n}\leq1$ in a way that guarantees
that (this can be done; see Remark \ref{remark:on-alg} below) }

\textrm{%
\begin{equation}
\sum_{k=0}^{\infty}\sum_{n=0}^{N_{k}-1}\beta_{k,n}<\infty.\label{eq:2.6}%
\end{equation}
\textbf{(1.2.2) }Let $\partial\phi(y^{k,n})$ be the subgradient set of $\phi$
at $y^{k,n}$ and define $v^{k,n}$ as follows:%
\begin{equation}
v^{k,n}=\left\{
\begin{array}
[c]{cc}%
-\frac{\displaystyle s^{k,n}}{\displaystyle\left\Vert s^{k,n}\right\Vert }, &
\text{if }0\notin\partial\phi(y^{k,n}),\\
0, & \text{if }0\in\partial\phi(y^{k,n}),
\end{array}
\right.
\end{equation}
where $\displaystyle s^{k,n}\in\partial\phi(y^{k,n}).$ }

\textrm{\textbf{(1.2.3) }Calculate%
\begin{equation}
y^{k,n+1}=y^{k,n}+\beta_{k,n}v^{k,n}\label{eq:2.11}%
\end{equation}
and go to \textbf{(1.2).} }

\textrm{\textbf{(1.3) }Exit the inner loop with the vector $y^{k,N_{k}}$ }

\textrm{\textbf{(1.4) }Calculate%
\begin{equation}
y^{k+1}=\displaystyle P_{\Omega_{k},w_{k}}\displaystyle(y^{k,N_{k}%
})\label{eq:2.12}%
\end{equation}
with $(\Omega_{k},w_{k})\in\mathcal{M}_{\ast}$, and go back to \textbf{(1).} }
\end{algorithm}

\begin{remark}
\textrm{\label{remark:on-alg}$\left.  {}\right.  $ }

\begin{enumerate}
\item \textrm{For step \textbf{(1.2.1) }in\textbf{ }Algorithm
\ref{alg:super-dsap} assume that we have available a summable sequence
$\left\{  \eta_{\ell}\right\}  _{\ell=0}^{\infty}$ of positive real numbers
(for example, $\eta_{\ell}=a^{\ell}$, where $0<a<1$). Then we can let the
algorithm generate, simultaneously with the sequence $\left\{  y^{k}\right\}
_{k=0}^{\infty}$, a sequence $\left\{  \beta_{k,n}\right\}  _{k=0}^{\infty}$
as a subsequence of $\left\{  \eta_{\ell}\right\}  _{\ell=0}^{\infty}$, by
choosing $\beta_{k,n}=\eta_{\ell}$ and increasing the index $\ell$ in every
pass through step \textbf{(1.2.1) }in\textbf{ }the\textbf{ }algorithm,
resulting in a positive summable sequence $\left\{  \beta_{k,n}\right\}
_{k=0}^{\infty},$ as required in (\ref{eq:2.6}). This is how it was done in
\cite[\textquotedblleft Superiorized Version of the Basic
Algorithm\textquotedblright\ in Section 4]{cdhst14}. }

\item \textrm{There are some differences between the \textquotedblleft
Superiorized Version of the Basic Algorithm\textquotedblright\ in Section 4 of
\cite{cdhst14} and Algorithm \ref{alg:super-dsap}. In \cite{cdhst14} it was
the case that $N_{k}=N$ for all $k\geq0,$ whereas here we allow the number of
times that the inner loop step \textbf{(1.1) }is exercised to vary from
iteration to iteration depending on the iteration index $k.$ }

\item \textrm{In our Algorithm \ref{alg:super-dsap} we do not have to check if
$\phi(y^{k,n+1})\leq\phi(y^{k})$ after (\ref{eq:2.11}) in step \textbf{(1.2.3)
}of the algorithm, as is done in step 14 of the \textquotedblleft Superiorized
Version of the Basic Algorithm\textquotedblright\ in Section 4 of
\cite{cdhst14}. In spite of this saving shortcut we are able to prove the main
result for our Algorithm \ref{alg:super-dsap}, as seen in Theorem
\ref{thm:2.1} below. }

\item \textrm{Admittedly, our algorithm is related to only Case \ref{case:1}
in Section \ref{sect:SM}, and uses negative subgradients in step
\textbf{(1.2.2) }and not general nonascend steps as in step 8 of the
\textquotedblleft Superiorized Version of the Basic
Algorithm\textquotedblright\ in Section 4 of \cite{cdhst14}. }

\item \textrm{Finally, as mentioned before, our findings are related to Case
\ref{case:1} for the consistent case $C=\cap_{i=1}^{m}C_{i}\neq\emptyset$ and
treat bounded perturbations resilience and not the notion of strong
perturbation resilience as in \cite{cdhst14,hgdc12}. This enables us to prove
asymptotic convergence results here but also calls for future research to
cover the inconsistent case. }

\item \textrm{Note that the DSAP method, covered here, is a versatile
algorithmic scheme that includes, as special cases, the fully sequential
projections method and the fully simultaneous projections method as
\textquotedblleft extreme\textquotedblright\ structures obtained by putting
all sets $C_{i}$ into a single string, or by putting each constraint in a
separate string, respectively. Consult the references mentioned in Case
\ref{case:1} above for further details and relevant references. }
\end{enumerate}
\end{remark}

We will prove the following theorem as our main result.

\begin{theorem}
\textrm{\label{thm:2.1}Let $\phi:X\rightarrow\mathbb{R}$ be a convex
continuous function, and let $C_{\ast}\subseteq C_{min}$ be a nonempty subset
of $C_{min}$. Let $r_{0}\in\left]  0,1\right]  $ and $\bar{L}\geq1$ be such
that%
\begin{equation}
|\phi(x)-\phi(y)|\leq\bar{L}||x-y||{\text{ for all }}x\in C_{\ast}{\text{ and
all }}y\in B(x,r_{0}),\label{eq:2.2}%
\end{equation}
and suppose that%
\begin{equation}
\{(\Omega_{k},w_{k})\}_{k=0}^{\infty}\subset\mathcal{M}_{\ast}.\label{eq:2.3}%
\end{equation}
Then, any sequence $\{y^{k}\}_{k=0}^{\infty},$ generated by Algorithm
\ref{alg:super-dsap}, converges in the norm topology of $X$ to $y^{\ast}\in C$
and exactly one of the following two cases holds: }
\end{theorem}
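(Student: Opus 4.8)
The plan is to separate the two assertions of the theorem: first the norm convergence of $\{y^{k}\}_{k=0}^{\infty}$ to a feasible limit, and then the dichotomy governing that limit. For the convergence I would recast one full outer step of Algorithm \ref{alg:super-dsap} as a single bounded-perturbation DSAP step. Unwinding the inner loop (\ref{eq:2.11}) gives $y^{k,N_{k}}=y^{k}+\sum_{n=0}^{N_{k}-1}\beta_{k,n}v^{k,n}$, so that by (\ref{eq:2.12}),
\begin{equation}
y^{k+1}=P_{\Omega_{k},w_{k}}\!\left(y^{k}+u^{k}\right),\quad u^{k}:=\sum_{n=0}^{N_{k}-1}\beta_{k,n}v^{k,n}.
\end{equation}
Since each $v^{k,n}$ is either a unit vector or zero, setting $\beta_{k}:=\sum_{n=0}^{N_{k}-1}\beta_{k,n}$ and $v^{k}:=u^{k}/\beta_{k}$ (and $v^{k}:=0$ when $\beta_{k}=0$) yields $u^{k}=\beta_{k}v^{k}$, $\|v^{k}\|\le1$, and, by (\ref{eq:2.6}), $\sum_{k=0}^{\infty}\beta_{k}=\sum_{k=0}^{\infty}\sum_{n=0}^{N_{k}-1}\beta_{k,n}<\infty$. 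Thus $\{\beta_{k}v^{k}\}$ is a bounded perturbation in the sense of Definition \ref{def:resilient}, and Theorem \ref{thm:1.1} (with the same $\{(\Omega_{k},w_{k})\}\subset\mathcal{M}_{\ast}$ from (\ref{eq:2.3})) delivers $y^{k}\to y^{\ast}$ in norm with $y^{\ast}\in C$.

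Fix $z\in C_{\ast}\subseteq C_{min}$, so $\phi(z)=\min_{x\in C}\phi(x)$ and, because $y^{\ast}\in C$, $\phi(y^{\ast})\ge\phi(z)$. The dichotomy is then a partition according to whether equality holds: if $\phi(y^{\ast})=\phi(z)$ then $y^{\ast}\in C_{min}$ and the limit solves (\ref{eq:cm}); this is the first case. The substance of the theorem is the second case, so I would assume $\phi(y^{\ast})>\phi(z)$ and prove that $\{y^{k}\}$ is strictly Fej\'er monotone with respect to $C_{\ast}$; since this forces $\phi(y^{\ast})>\phi(z)$, it cannot coexist with the first case, giving the \emph{exactly one} clause.

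For the second case I would first control the inner iterates. Because $\|y^{k,n}-y^{k}\|\le\sum_{j=0}^{N_{k}-1}\beta_{k,j}\to0$ by (\ref{eq:2.6}), all inner iterates converge to $y^{\ast}$ uniformly in $n$; continuity of $\phi$ then supplies a constant $c>0$ and an index $k_{0}$ with $\phi(y^{k,n})-\phi(z)\ge c$ for every $k\ge k_{0}$ and every $0\le n<N_{k}$. In particular $0\notin\partial\phi(y^{k,n})$, so $v^{k,n}=-s^{k,n}/\|s^{k,n}\|$ is a genuine unit vector. The subgradient inequality $\langle s^{k,n},y^{k,n}-z\rangle\ge\phi(y^{k,n})-\phi(z)\ge c$, combined with an upper bound $\|s^{k,n}\|\le G$ — available since a convex continuous $\phi$ is locally Lipschitz and the $y^{k,n}$ stay in a bounded set, with (\ref{eq:2.2}) supplying quantitative Lipschitz control near $C_{\ast}$ — yields $\langle v^{k,n},y^{k,n}-z\rangle\le -c/G$. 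Expanding one step,
\begin{equation}
\|y^{k,n+1}-z\|^{2}=\|y^{k,n}-z\|^{2}+2\beta_{k,n}\langle v^{k,n},y^{k,n}-z\rangle+\beta_{k,n}^{2}\le\|y^{k,n}-z\|^{2}+\beta_{k,n}\!\left(\beta_{k,n}-\tfrac{2c}{G}\right).
\end{equation}
Since $\beta_{k,n}\to0$, for all large $k$ we have $\beta_{k,n}<2c/G$, so every inner step strictly decreases the squared distance; chaining over $n=0,\dots,N_{k}-1$ gives $\|y^{k,N_{k}}-z\|<\|y^{k}-z\|$. Finally, as $z\in C$, the averaged operator fixes $z$ by (\ref{eq:1.9}) and is nonexpansive by (\ref{eq:1.8}) (equivalently Corollary \ref{cor:corollary1.1}), whence $\|y^{k+1}-z\|\le\|y^{k,N_{k}}-z\|<\|y^{k}-z\|$ for all $k\ge k_{0}$, which is the asserted strict Fej\'er monotonicity with respect to $C_{\ast}$.

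The step I expect to be the main obstacle is the simultaneous extraction of the uniform positive gap $c$ along the \emph{entire} inner loop and the uniform subgradient bound $G$, and then verifying that the first-order decrease $-2\beta_{k,n}c/G$ dominates the second-order term $\beta_{k,n}^{2}$ once $\beta_{k,n}$ is small. This is precisely where the uniform-in-$n$ convergence of the inner iterates, the continuity of $\phi$, and the local Lipschitz control furnished by (\ref{eq:2.2}) must be combined; the remaining passages (the perturbation reformulation and the nonexpansive projection estimate) are routine.
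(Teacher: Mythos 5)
Your proposal is correct in outline and shares the paper's skeleton: recast the inner loop as a summable bounded perturbation and invoke Theorem \ref{thm:1.1} to get $y^{k}\to y^{\ast}\in C$; split on whether $\phi(y^{\ast})$ attains the minimal value over $C$; and, in the negative case, establish a per-inner-step decrease of $\Vert y^{k,n}-z\Vert^{2}$ of order $\beta_{k,n}$, chain it over $n$, and finish with the nonexpansive, $C$-fixing operator $P_{\Omega_{k},w_{k}}$ via (\ref{eq:1.8})--(\ref{eq:1.9}). Where you genuinely diverge is in the key estimate $\left\langle v^{k,n},y^{k,n}-z\right\rangle \leq-\mathrm{const}<0$: you take the subgradient inequality at $z$ and divide by an upper bound $G$ on $\Vert s^{k,n}\Vert$. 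The paper's Lemma \ref{lem:3.2} avoids any bound on subgradient norms: using (\ref{eq:2.2}) it shows $\phi(z')\leq\phi(\bar{x})+4^{-1}\Delta$ for every $z'$ in the ball $B(\bar{x},4^{-1}\Delta\bar{L}^{-1})$, hence $\left\langle v,z'-x\right\rangle <0$ for all such $z'$ by Lemma \ref{lem:3.1}, and then evaluates this at the single point $\bar{z}=\bar{x}+4^{-1}\bar{L}^{-1}\Delta\Vert v\Vert^{-1}v$ to read off $\left\langle \Vert v\Vert^{-1}v,\bar{x}-x\right\rangle <-4^{-1}\bar{L}^{-1}\Delta$ directly. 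That device yields an explicit constant $c_{0}=(4\bar{L})^{-1}\Delta_{0}$ depending only on the data of (\ref{eq:2.2}), and it is precisely why the hypothesis (\ref{eq:2.2}) is stated around points of $C_{\ast}$.

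The one step of yours that needs repair is the justification of $G$, which you yourself flag as the main obstacle. ``Locally Lipschitz plus the iterates stay in a bounded set'' is not sufficient: in an infinite-dimensional Hilbert space a continuous convex function need not be Lipschitz, or even bounded, on bounded sets. Nor can (\ref{eq:2.2}) supply the bound, since it controls $\phi$ only near $C_{\ast}$, whereas in case (b) the points $y^{k,n}$ accumulate at $y^{\ast}\notin C_{min}$, which may be far from $C_{\ast}$; attributing the Lipschitz control to (\ref{eq:2.2}) here is a misdirection. The patch is cheap: since $\Vert y^{k,n}-y^{k}\Vert\leq\sum_{j}\beta_{k,j}\to0$ and $y^{k}\to y^{\ast}$, for all large $k$ every inner iterate lies in a fixed small ball around $y^{\ast}$ on which $\phi$, being convex and continuous at $y^{\ast}$, is Lipschitz with some constant $G$, whence $\Vert s^{k,n}\Vert\leq G$ there. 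With that repair your chain of inequalities goes through and in fact delivers the quantitative bound (\ref{eq:in-thm-2.1}) with a suitable $c_{0}\in\left]0,1\right[$ of the form $\min\{c/G,1/2\}$, not merely the strict inequality you record at the end; the price relative to the paper's route is that your constant involves the nonconstructive local Lipschitz bound $G$ at $y^{\ast}$ rather than only $\bar{L}$ and $\Delta_{0}$.
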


(a) $y^{\ast}\in C_{min}$;

(b) $y^{\ast}\notin C_{min}$ and there exist a natural number $k_{0}$ and a
$c_{0}\in\left]  0,1\right[  $ such that for each $x\in C_{\ast}$ and each
integer $k\geq k_{0}$,%
\begin{equation}
\Vert y^{k+1}-x\Vert^{2}\leq\Vert y^{k}-x\Vert^{2}-c_{0}\sum_{n=1}^{N_{k}%
-1}\beta_{k,n}, \label{eq:in-thm-2.1}%
\end{equation}
showing that $\{y^{k}\}_{k=0}^{\infty}$ is strictly Fej\'{e}r-monotone with
respect to $C_{\ast},$ i.e., that
\begin{equation}
\Vert y^{k+1}-x\Vert^{2}<\Vert y^{k}-x\Vert^{2},\text{ for all }k\geq k_{0},
\label{eq:strict-F}%
\end{equation}
because $c_{0}\sum_{n=1}^{N_{k}-1}\beta_{k,n}>0.$

This theorem establishes a mathematical basis for the behavior of the SM when
dealing with Case \ref{case:1} in Section \ref{sect:SM}, i.e., $\Psi_{T}$ is
the solution set of a CFP as in (\ref{eq:1.1}), assuming that $\Psi_{T}%
=C\neq\emptyset,$ and using the DSAP method algorithmic scheme as a
feasibility-seeking algorithm which is indeed bounded perturbations resilient.
The bounded perturbations resilience of the DSAP method has been proved in
\cite{cz12} and the practical behavior of the SM was observed in numerous
recent works, so, we furnish here a mathematical guarantee of the convergence
behavior of the superiorized version of the DSAP Algorithm
\ref{alg:super-dsap}.

Theorem \ref{thm:2.1} tells us that any sequence $\{y^{k}\}_{k=0}^{\infty}$,
generated by Algorithm \ref{alg:super-dsap}, will not only converge to a
feasible point of the underlying CFP, which is due to the bounded
perturbations resilience of the DSAP method, but, additionally, that either
its limit point will solve the CM problem (\ref{eq:cm}) with $\Psi_{T}%
=C\neq\emptyset,$ or that the sequence $\{y^{k}\}_{k=0}^{\infty}$ is strictly
Fej\'{e}r-monotone with respect to a subset $C_{\ast}$ of the solution set
$C_{min}$ of the CM problem, according to (\ref{eq:in-thm-2.1}).

This strict Fej\'{e}r-monotonicity of the sequence $\{y^{k}\}_{k=0}^{\infty}$
does not suffice to guarantee its convergence to a minimum point of
(\ref{eq:cm}), even though the sequence does converge to a limit point in $C$,
but it says that the superiorized version algorithm retains asymptotic
convergence to a feasible point in $C,$ and that the so created
feasibility-seeking sequence has the additional property of getting strictly
closer, without necessarily converging, to a subset of the solution set of
minimizers of the CM problem (\ref{eq:cm}). For properties of
Fej\'{e}r-monotone and strictly Fej\'{e}r-monotone sequences see, e.g.,
\cite[Theorem 2.16]{bb96}, \cite[Subsection 3.3]{CEG12} and \cite{schott95}.

Another result that describes the behavior of the superiorized version of a
basic algorithm is \cite[Theorm 4.2]{cdhst14}. It considers the case of strong
perturbation resilience of the underlying basic algorithm, contrary to our
result that considers bounded perturbations resilience, and is, therefore,
valid to the case of consistent feasible sets. The inability to prove that the
superiorized version of a basic algorithm reduces the value of the objective
function $\phi,$ a fact which was repeatedly observed experimentally, made us
use the term \textquotedblleft heuristic\textquotedblright\ in
\cite{hgdc12,wenma13}. In spite of the strict Fej\'{e}r monotonicity proven
here the question of proving mathematically the reduction of the value of the
objective function $\phi$ remains.

Another open question is to formulate some reasonable further conditions that
will help distinguish before hand between the two alternatives in Theorem
\ref{thm:2.1} for the behavior of the superiorized version of a DSAP
algorithm. Published experimental results repeatedly confirm that reduction of
the value of the objective function $\phi$ is indeed achieved, without loosing
the convergence toward feasibility, see
\cite{cz12,bk13,bdhk07,cdh10,dhc09,gh13,rand-conmath,hd08,hgdc12,wenma13,ndh12,pscr10}%
. In some of these cases the SM returns a lower value of the objective
function $\phi$ than an exact minimization method with which it is compared,
e.g., \cite[Table 1]{cdhst14}.

\section{The Main Result: Strict Fej\'{e}r Monotonicity in the Superiorization
Method\label{sect:main-result}}

\subsection{Auxiliary results}

We will need the next two lemmas.

\begin{lemma}
\textrm{\label{lem:3.1}Let $x,y\in X$ and $\Delta>0$ and let $\phi
:X\rightarrow\mathbb{R}$ be a convex continuous function.%
\begin{equation}
\text{If }\phi(x)-\phi(y)>\Delta\;\text{and }v\in\partial\phi(x)\text{ then
}\left\langle v,y-x\right\rangle <-\Delta.\label{eq:3.1}%
\end{equation}
}
\end{lemma}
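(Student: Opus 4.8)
The plan is to prove Lemma \ref{lem:3.1} directly from the defining
subgradient inequality for convex functions. Recall that $v\in\partial\phi(x)$
means precisely that for every $z\in X$ one has
$\phi(z)\geq\phi(x)+\langle v,z-x\rangle$. The statement to be proved concerns
the inner product $\langle v,y-x\rangle$, so the natural move is to apply the
subgradient inequality at the specific point $z=y$.

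First I would write down the subgradient inequality at $z=y$, namely
$\phi(y)\geq\phi(x)+\langle v,y-x\rangle$. Rearranging this gives
$\langle v,y-x\rangle\leq\phi(y)-\phi(x)$. Next I would use the hypothesis
$\phi(x)-\phi(y)>\Delta$, which is exactly the statement that
$\phi(y)-\phi(x)<-\Delta$. Chaining these two inequalities yields
$\langle v,y-x\rangle\leq\phi(y)-\phi(x)<-\Delta$, which is the desired
conclusion $\langle v,y-x\rangle<-\Delta$.

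There is essentially no obstacle here; the lemma is an immediate consequence of
the definition of the subdifferential, and the only point requiring any care is
bookkeeping the sign convention when translating the hypothesis
$\phi(x)-\phi(y)>\Delta$ into $\phi(y)-\phi(x)<-\Delta$. The continuity and
convexity assumptions on $\phi$ guarantee that $\partial\phi(x)$ is well
behaved (in particular nonempty and that the subgradient inequality holds
globally), but for the proof itself only the subgradient inequality at the
single point $y$ is actually used.

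I expect the formal write-up to be two or three lines. The reason this small
lemma is stated separately is that it will presumably be applied repeatedly in
the main convergence argument, where $x$ plays the role of an iterate $y^{k,n}$
with nonzero subgradient, $v=s^{k,n}$ (or its normalization), and $y$ ranges
over points of the solution set $C_{\ast}$ where $\phi$ attains smaller values;
the strict gap $\Delta$ then feeds into the strict decrease in
\eqref{eq:in-thm-2.1}. So while the proof is routine, isolating it lets the
later estimates invoke a clean strict inequality on $\langle v,y-x\rangle$.
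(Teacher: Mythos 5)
Your proof is correct and is exactly the paper's argument: apply the subgradient inequality at $z=y$ to get $\left\langle v,y-x\right\rangle \leq\phi(y)-\phi(x)$, then use the hypothesis to bound the right-hand side by $-\Delta$. Nothing further is needed.
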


\begin{proof}
It follows from the subgradient inequality that,%
\begin{equation}
\left\langle v,y-x\right\rangle \leq\phi(y)-\phi(x)<-\Delta,
\end{equation}
thus proving the lemma.
\end{proof}

\begin{lemma}
\textrm{\label{lem:3.2}Under the assumptions of Theorem \ref{thm:2.1}, let
$\bar{x}\in C_{\ast}$, $\Delta\in\left]  0,r_{0}\right]  ,\;$\newline%
$\alpha\in\left]  0,1\right]  ,$ and $x\in X$ satisfy%
\begin{equation}
\phi(x)-\phi(\bar{x})>\Delta,\label{eq:3.4}%
\end{equation}
and assume that $v\in\partial\phi(x)$ and that $(\Omega,w)\in\mathcal{M}%
_{\ast}.$ Then, $v\not =0$ and%
\begin{equation}
y:=P_{\Omega,w}(x-\alpha||v||^{-1}v)\label{eq:3.6}%
\end{equation}
satisfies
\begin{equation}
\Vert y-\bar{x}\Vert^{2}\leq\Vert x-\alpha||v||^{-1}v-\bar{x}\Vert^{2}%
\leq\Vert x-\bar{x}\Vert^{2}-2\alpha(4\bar{L})^{-1}\Delta+\alpha^{2}.
\end{equation}
}
\end{lemma}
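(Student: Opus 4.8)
The plan is to reduce everything to a single scalar lower bound and then establish that bound by probing the subgradient $v$ against a point sitting at distance exactly $r_{0}$ from $\bar{x}$. First I would dispose of the two easy claims. That $v\neq0$ is immediate from Lemma~\ref{lem:3.1} applied with the pair $(x,\bar{x})$: since $\phi(x)-\phi(\bar{x})>\Delta$ and $v\in\partial\phi(x)$, we get $\langle v,\bar{x}-x\rangle<-\Delta<0$, which forces $v\neq0$, so the unit vector $u:=\Vert v\Vert^{-1}v$ is well defined. The first inequality is pure nonexpansiveness: because $\bar{x}\in C_{\ast}\subseteq C_{min}\subseteq C$, the fixed-point identity (\ref{eq:1.9}) gives $P_{\Omega,w}(\bar{x})=\bar{x}$, and then (\ref{eq:1.8}) yields $\Vert y-\bar{x}\Vert=\Vert P_{\Omega,w}(x-\alpha u)-P_{\Omega,w}(\bar{x})\Vert\leq\Vert x-\alpha u-\bar{x}\Vert$, so squaring gives the left-hand estimate.

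For the right-hand estimate, I would expand $\Vert x-\alpha u-\bar{x}\Vert^{2}=\Vert x-\bar{x}\Vert^{2}-2\alpha\langle u,x-\bar{x}\rangle+\alpha^{2}$ using $\Vert u\Vert=1$. Comparing with the target inequality, the entire lemma collapses to the single scalar claim
\[
p:=\langle u,x-\bar{x}\rangle\geq(4\bar{L})^{-1}\Delta .
\]
From the subgradient inequality at $x$ tested against $\bar{x}$ I already have the cheap bound $p\geq\Vert v\Vert^{-1}(\phi(x)-\phi(\bar{x}))>\Delta/\Vert v\Vert$, which is strong when $\Vert v\Vert$ is small but useless when $\Vert v\Vert$ is large; the real work is to control $\Vert v\Vert$.

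The key step is to introduce the probe point $w:=\bar{x}+r_{0}u$. Since $\Vert w-\bar{x}\Vert=r_{0}$, the Lipschitz hypothesis (\ref{eq:2.2}) applies and gives $\phi(w)\leq\phi(\bar{x})+\bar{L}r_{0}$. On the other hand the subgradient inequality at $x$ gives $\phi(w)\geq\phi(x)+\langle v,w-x\rangle=\phi(x)+\Vert v\Vert(r_{0}-p)$, where I use $\langle v,w-x\rangle=\langle v,\bar{x}-x\rangle+r_{0}\Vert v\Vert=\Vert v\Vert(r_{0}-p)$. Combining the two and invoking $\phi(\bar{x})-\phi(x)<-\Delta$ yields the crucial inequality $\Vert v\Vert(r_{0}-p)<\bar{L}r_{0}-\Delta<\bar{L}r_{0}$.

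Finally I would run a case split on $p$ against the threshold $r_{0}/2$. If $p\leq r_{0}/2$, then $r_{0}-p\geq r_{0}/2$, so the crucial inequality forces $\Vert v\Vert<2\bar{L}$, and the cheap bound gives $p>\Delta/\Vert v\Vert>\Delta/(2\bar{L})\geq(4\bar{L})^{-1}\Delta$. If instead $p>r_{0}/2$, then using $\Delta\leq r_{0}$ and $\bar{L}\geq1$ I get $p>r_{0}/2\geq\Delta/2\geq(4\bar{L})^{-1}\Delta$. Either way $p\geq(4\bar{L})^{-1}\Delta$, and substituting back into the expansion completes the proof. The main obstacle is precisely this control of the subgradient direction: $\Vert v\Vert$ cannot be bounded globally (the Lipschitz estimate (\ref{eq:2.2}) is only local around $C_{\ast}$), so one must either bound $\Vert v\Vert$ or bound $p$ directly depending on the regime, and the probe point together with the $r_{0}/2$ dichotomy is what reconciles the two; the hypotheses $\bar{L}\geq1$ and $\Delta\leq r_{0}$ are exactly what make the threshold choice work and produce the constant $(4\bar{L})^{-1}$.
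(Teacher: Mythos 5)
Your proof is correct. Both you and the paper first handle $v\neq 0$ via Lemma \ref{lem:3.1}, dispose of the left inequality by (\ref{eq:1.8}) and (\ref{eq:1.9}), and reduce the right inequality to the scalar bound $\langle \Vert v\Vert^{-1}v,\,x-\bar{x}\rangle \geq (4\bar{L})^{-1}\Delta$; both then establish that bound by probing $\phi$ at a point of the form $\bar{x}+t\Vert v\Vert^{-1}v$ and playing the local Lipschitz estimate (\ref{eq:2.2}) against the subgradient inequality. The difference is the choice of $t$. The paper takes $t=(4\bar{L})^{-1}\Delta$, which lies in $B(\bar{x},r_{0})$ because $\Delta\leq r_{0}$ and $\bar{L}\geq 1$; then $\phi(\bar{z})\leq\phi(\bar{x})+\Delta/4<\phi(x)$, so Lemma \ref{lem:3.1} gives $\langle v,\bar{z}-x\rangle<0$, and expanding $\bar{z}-x=(\bar{x}-x)+(4\bar{L})^{-1}\Delta\Vert v\Vert^{-1}v$ delivers the bound in one line, with no need to control $\Vert v\Vert$ at all. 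You take $t=r_{0}$, which yields the weaker-looking inequality $\Vert v\Vert(r_{0}-p)<\bar{L}r_{0}$ for $p:=\langle \Vert v\Vert^{-1}v,x-\bar{x}\rangle$, and you must then combine it with the crude bound $\Vert v\Vert p>\Delta$ through a dichotomy on $p\lessgtr r_{0}/2$: when $p$ is small you can bound $\Vert v\Vert<2\bar{L}$, and when $p$ is large you are already done via $\Delta\leq r_{0}$ and $\bar{L}\geq1$. Your route is longer but makes explicit the genuine obstacle you name (the subgradient norm has no global bound and is only controllable in the regime where $p$ is small), whereas the paper's calibrated probe radius sidesteps that issue entirely by needing only the sign of $\langle v,\bar{z}-x\rangle$. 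Both arguments use the hypotheses $\bar{L}\geq1$ and $\Delta\leq r_{0}$, and both produce the same constant $(4\bar{L})^{-1}$.
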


\begin{proof}
Equations (\ref{eq:3.1}) and (\ref{eq:3.4}) imply that $v\not =0$. By
(\ref{eq:2.2}), for each $z\in B(\bar{x},4^{-1}\Delta\bar{L}^{-1})$ we have%
\begin{equation}
\phi(z)-\phi(\bar{x})\leq\bar{L}||z-\bar{x}||\leq4^{-1}\Delta. \label{eq:3.7}%
\end{equation}
Thus, in view of (\ref{eq:3.7}), Lemma \ref{lem:3.1}, and (\ref{eq:3.4}), we
have for all $z\in B(\bar{x},4^{-1}\Delta\bar{L}^{-1})$,
\begin{equation}
\left\langle v,z-x\right\rangle <-(3/4)\Delta, \label{eq:3.8}%
\end{equation}
which implies that%
\begin{equation}
\left\langle ||v||^{-1}v,z-x\right\rangle <0. \label{eq:3.9}%
\end{equation}
Defining $\bar{z}:=\bar{x}+4^{-1}\bar{L}^{-1}\Delta\Vert v\Vert^{-1}v,$ we
obtain, by (\ref{eq:3.9}),%
\begin{equation}
0>\left\langle ||v||^{-1}v,\bar{z}-x\right\rangle =\left\langle ||v||^{-1}%
v,\bar{x}+4^{-1}\bar{L}^{-1}\Delta\Vert v\Vert^{-1}v-x\right\rangle ,
\end{equation}
which, in turn, yields%
\begin{equation}
\left\langle ||v||^{-1}v,\bar{x}-x\right\rangle <-4^{-1}\bar{L}^{-1}\Delta.
\label{eq:3.10}%
\end{equation}
Defining now $u:=x-\alpha\Vert v\Vert^{-1}v,$ (\ref{eq:3.10}) gives rise to%
\begin{align}
\Vert u-\bar{x}\Vert^{2}  &  =\Vert x-\alpha\Vert v\Vert^{-1}v-\bar{x}%
\Vert^{2}\nonumber\\
&  =\Vert x-\bar{x}\Vert^{2}-2\left\langle x-\bar{x},\alpha\Vert v\Vert
^{-1}v\right\rangle +\alpha^{2}\nonumber\\
&  \leq\Vert x-\bar{x}\Vert^{2}-2\alpha(4\bar{L})^{-1}\Delta+\alpha^{2}.
\end{align}
This yields, by (\ref{eq:3.6}), the definition of $u,$ (\ref{eq:1.8}),
(\ref{eq:1.9}), and the assumption that $\bar{x}\in C_{\ast}$,
\begin{align}
\Vert y-\bar{x}\Vert^{2}  &  =\Vert P_{\Omega,w}(u)-\bar{x}\Vert
^{2}\nonumber\\
&  \leq\Vert u-\bar{x}\Vert^{2}\leq\Vert x-\bar{x}\Vert^{2}-2\alpha(4\bar
{L})^{-1}\Delta+\alpha^{2},
\end{align}
which completes the proof of the lemma.
\end{proof}

\subsection{Proof of Theorem \ref{thm:2.1}}

We are now ready to prove the main result of our paper, Theorem \ref{thm:2.1}.

\begin{proof}
From Algorithm \ref{alg:super-dsap}, a sequence $\{y^{k}\}_{k=0}^{\infty}$
that it generates has the property that for each integer $k\geq1$ and each
$h\in\{1,2,\dots,N_{k}\}$,%
\begin{equation}
\Vert y^{k,h}-y^{k}\Vert=\Vert\sum_{j=1}^{h}(y^{k,j}-y^{k,j-1})\Vert\leq
\sum_{n=0}^{N_{k}-1}\Vert y^{k,n}-y^{k,n-1}\Vert\leq\sum_{n=0}^{N_{k}-1}%
\beta_{k,n} \label{eq:4.2}%
\end{equation}
and, in particular,%
\begin{equation}
\Vert y^{k,N_{k}}-y^{k}\Vert\leq\sum_{n=0}^{N_{k}-1}\beta_{k,n},
\end{equation}
so that, by (\ref{eq:2.6}),%
\begin{equation}
\sum_{k=0}^{\infty}\left\Vert y^{k,N_{k}}-y^{k}\right\Vert \leq\sum
_{k=0}^{\infty}\left(  \sum_{n=0}^{N_{k}-1}\beta_{k,n}\right)  .
\end{equation}
The bounded perturbation resilience secured by Theorem \ref{thm:1.1},
guarantees the convergence of $\{y^{k}\}_{k=0}^{\infty}$ to a point in $C,$
namely, that there exists%
\begin{equation}
y^{\ast}=\lim_{k\rightarrow\infty}y^{k}\in C \label{eq:4.5}%
\end{equation}
in the norm topology.

Assume that (a) of Theorem \ref{thm:2.1} does not hold, i.e., that $y^{\ast
}\notin C_{min}$. Then there is a $\Delta_{0}\in\left]  0,r_{0}/2\right[  $
such that%
\begin{equation}
\phi(y^{\ast})>\phi(x)+4\Delta_{0},{\text{ for all }}x\in C_{min},
\label{eq:4.7}%
\end{equation}
and there is a natural number $k_{0}$ such that for all integers $k\geq k_{0}%
$, by (\ref{eq:2.6}),%
\begin{equation}
\sum_{n=0}^{N_{k}-1}\beta_{k,n}<(16\bar{L})^{-1}\Delta_{0}. \label{eq:4.8}%
\end{equation}
This index $k_{0}$ can be chosen so that additionally for all integers $k\geq
k_{0}$ and all $\ell\in\{0,1,\dots,N_{k}\}$, by (\ref{eq:4.2}), (\ref{eq:4.5})
and (\ref{eq:4.7}),%
\begin{equation}
\phi(y^{k,\ell})>\phi(x)+2\Delta_{0},{\text{ for all }}x\in C_{min}.
\label{eq:4.9}%
\end{equation}

Next we apply Lemma \ref{lem:3.2} as follows. Pick an $\bar{x}\in C_{\ast}$,
let $k\geq k_{0}$ where $k_{0}$ is as above, and let $n\in\{1,2,\dots
,N_{k}\}.$ Use (\ref{eq:2.3}), (\ref{eq:2.11}), (\ref{eq:4.8})--(\ref{eq:4.9})
and the fact that $0<\beta_{k,n}\leq1$ in Algorithm \ref{alg:super-dsap}$,$
and apply Lemma \ref{lem:3.2} with%
\begin{equation}
\alpha=\beta_{k,n-1},\;\Delta=2\Delta_{0},\;x=y^{k,n-1},\;v=v^{k,n-1}%
,\;\text{and }(\Omega,w)=(\Omega_{k},w_{k}).
\end{equation}
This leads to%
\begin{align}
\Vert y^{k,n}-\bar{x}\Vert^{2}  &  \leq\Vert y^{k,n-1}-\bar{x}\Vert^{2}%
-2\beta_{k,n-1}(4\bar{L})^{-1}2\Delta_{0}+\beta_{k,n-1}^{2}\\
&  \leq\Vert y^{k,n-1}-\bar{x}\Vert^{2}-\beta_{k,n-1}(4\bar{L})^{-1}\Delta
_{0}, \label{eq:4.12}%
\end{align}
because $-3\beta_{k,n-1}(4\bar{L})^{-1}\Delta_{0}+\beta_{k,n-1}^{2}\leq0.$ So,
for $n=N_{k},$ in view (\ref{eq:2.12}), we have%
\begin{equation}
\Vert y^{k+1}-\bar{x}\Vert\leq\Vert y^{k,N_{k}}-\bar{x}\Vert. \label{eq:4.13}%
\end{equation}
By $y^{k,0}=y^{k}$ in Algorithm \ref{alg:super-dsap}, (\ref{eq:4.12}) and
(\ref{eq:4.13}),%
\begin{align}
\Vert y^{k}-\bar{x}\Vert^{2}-\Vert y^{k+1}-\bar{x}\Vert^{2}  &  \geq\Vert
y^{k,0}-\bar{x}\Vert^{2}-\Vert y^{k,N_{k}}-\bar{x}\Vert^{2}\nonumber\\
&  =\sum_{n=0}^{N_{k}-1}(\Vert y^{k,n-1}-\bar{x}\Vert^{2}-\Vert y^{k,n}%
-\bar{x}\Vert^{2})\nonumber\\
&  \geq(4\bar{L})^{-1}\Delta_{0}\sum_{n=0}^{N_{k}-1}\beta_{k,n},
\end{align}
and%
\begin{equation}
\Vert y^{k+1}-\bar{x}\Vert^{2}\leq\Vert y^{k}-\bar{x}\Vert^{2}-(4\bar{L}%
)^{-1}\Delta_{0}\sum_{n=0}^{N_{k}-1}\beta_{k,n},
\end{equation}
which completes the proof of Theorem \ref{thm:2.1}.
\end{proof}

\section{Conclusions\label{sect:conclusions}}

In very general terms, the superiorization methodology works by taking an
iterative algorithm, investigating its perturbation resilience, and then using
proactively such perturbations in order to \textquotedblleft
force\textquotedblright\ the perturbed algorithm to do something useful. The
perturbed algorithm is called the \textquotedblleft superiorized
version\textquotedblright\ of the original unperturbed algorithm.

If the original algorithm is efficient and useful for what it is designed to
do (computationally efficient and useful in terms of the application at hand),
and if the perturbations are simple and not expensive to calculate, then the
advantage of this method is that, for essentially the computational cost of
the original algorithm, we are able to get something more by steering its
iterates according to the perturbations.

This is a very general principle, which has been successfully used in some
important practical applications and awaits to be implemented and tested in
additional fields; see, e.g., the recent papers \cite{rand-conmath,sh14}, for
applications in intensity-modulated radiation therapy and in nondestructive
testing. An important case is when (i) the original algorithm is a
feasibility-seeking algorithm, or one that strives to find
constraint-compatible points for a family of constraints, and (ii) the
perturbations that are interlaced into the original algorithm aim at reducing
(not necessarily minimizing) a given merit (objective) function. Since its
inception in 2007, the superiorization method has developed and evolved and it
seems now worthwhile to distinguish between two directions, that nourish from
the same general principle.

One is the direction when only bounded perturbation resilience is used and the
constraints are assumed to be consistent (nonempty intersection). Then one
treats the \textquotedblleft superiorized version\textquotedblright\ of the
original unperturbed algorithm actually as a recursion formula that produces
an infinite sequence of iterates and convergence questions are meant in their
asymptotic nature. This is the framework in which we work in this paper.

The second direction does not assume consistency of the constraints but uses
instead a proximity function that \textquotedblleft measures\textquotedblright%
\ the violation of the constraints. Instead of seeking asymptotic feasibility,
it looks at $\varepsilon$-compatibility and uses the notion of strong
perturbation resilience. The same core \textquotedblleft superiorized
version\textquotedblright\ of the original unperturbed algorithm might be
investigated in each of these directions, but the second is the more practical
one whereas the first makes only asymptotic statements.

We propose the terms \textquotedblleft weak superiorization\textquotedblright%
\ and \textquotedblleft strong superiorization\textquotedblright\ as a
nomenclature for the first and second directions, respectively.\bigskip

\textbf{Acknowledgments} We thank Gabor Herman for an enlightening discussion
about terminology and we greatly appreciate the constructive comments of two
anonymous reviewers which helped us improve the paper.

\end{document}